\def\R{{ I\!\!R}}
\def\II{{\rm I\kern-0.5exI}}
\def\III{{\rm I\kern-0.5exI\kern-0.5exI}}
\numberwithin{equation}{section}
\newtheorem{theorem}{Theorem}[section]
\newtheorem{remark}[theorem]{Remark}
\newtheorem{lemma}[theorem]{Lemma}
\begin{document}
\title{ Erratum: Degenerate diffusion with a drift potential: \\
A viscosity solution approach}
\author{ Inwon C.  Kim \thanks{Department of Mathematics, UCLA.  The author is partially supported by NSF 0970072}}
\maketitle
\begin{abstract}
The earlier paper \cite{lk} contains a lower bound of the solution in terms of its $L^1$ norm, which is incorrect. In this note we explain the mistake and present a correction to it under the restriction that the permeability constant $m$ satisfies $1< m <2$.  As a consequence, the quantitative estimates on the convergence rate (Main Theorem (c) and Theorem 3.6 in \cite{lk} ) only hold for $1<m<2$. For $m\geq 2$ a partial convergence rate is obtained.
\end{abstract}

In \cite{lk},  the construction of the barrier function in step 2. of Lemma 3.4 is incorrect: this is due to the fact that the equation $u_t = (m-1)u\Delta u +|Du|^2 -C$ with $C>0$  is not well-posed when the solution becomes negative.  In the case  $1<m<2$ we present a corrected and simplified proof of Lemma 3.4, where the aforementioned error is fixed by considering an alternative equation \eqref{equation2} in the density form.  The validity of Lemma 3.4 in the case $m\geq 2$ remains open. 

Secondly, we point out that the proof and the statement of Lemma 3.5 have been originally presented just in the case $m=2$ without clarification: below we will state the general result as well as the difference in the proof.

Consequently, the results of Main Theorem (c) and Theorem 3.6 in \cite{lk} are only valid for $1<m<2$. For $m\geq 2$, the rate can be only obtained in terms of how far the free boundary of the solution is from the support of the equilibrium (see Theorem~\ref{thm:correction}).

\begin{lemma}[Lemma 3.4 in \cite{lk}, corrected version]\label{lemma1} Let $1<m<2$ and $(x_0,t_0)\in\R^n\times (0,\infty)$. Then there exists small constants $k, k', a_0>0$, depending on $m$,$n$ and the $C^2$-norm of $\Phi$ in $B_1(x_0)$, such that the following is true:
Suppose, for $0<a<a_0$,
$$
a^{-n}\int_{B_a(x_0)} \rho(\cdot,t_0) dx \geq a^k.
$$
Then $u(\cdot,t_0+a) \geq a^{k'}$ in $B_a(x_0)$. 
\end{lemma}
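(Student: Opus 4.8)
\emph{Proof idea.} The plan is to replace the ill-posed pressure barrier of the original argument by a comparison, from below, with an explicit self-similar subsolution of the \emph{density}-form equation~\eqref{equation2}. Passing from the pressure equation $u_t=(m-1)u\Delta u+|Du|^2-C$ to the density variable turns the forcing $-C$ into a sink of the type $-c\,\rho^{2-m}$; for $1<m<2$ the exponent $2-m$ is positive, so this sink is a continuous function of $\rho$ that \emph{vanishes} (rather than blows up) at the free boundary, and hence~\eqref{equation2} is a well-posed degenerate parabolic problem on nonnegative functions which admits a comparison principle. This is the structural reason for the restriction $1<m<2$.

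The first step is a reduction to a single localized lump of mass. Put $g:=\rho(\cdot,t_0)\mathbf 1_{B_a(x_0)}$, so that $0\le g\le\rho(\cdot,t_0)$ and $\int_{\R^n}g\,dx\ge a^{n+k}$, and let $\ud\rho$ solve the density equation (with the drift $\mathrm{div}(\rho\,D\Phi)$) with datum $g$ at time $t_0$. By comparison $\ud\rho\le\rho$ for $t\ge t_0$, while $\ud\rho$ retains its mass and a compact support; it therefore suffices to bound $\ud\rho$ from below. (Equivalently one may first rescale $x\mapsto x_0+ax$, $t\mapsto t_0+at$, $\rho\mapsto a^{-1/(m-1)}\rho$: this normalizes $a=1$, turns the hypothesis into a \emph{large} lower bound on the mass in $B_1$ and the conclusion into a large lower bound on the pressure in $B_1$ at time $1$, and leaves the drift as a bounded first-order perturbation controlled by $\|\Phi\|_{C^2(B_1(x_0))}$.)

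The crux is then to extract a genuine pointwise lower bound with constants independent of the solution. I would show that for a suitably small intermediate time $s$ — a fixed power of $a$ when the mass is $\sim a^{n+k}$, and smaller otherwise — one has $\ud\rho(\cdot,t_0+s)\ge c\,a^{k}$ on a ball $B_{c'a}(y_0)$ for some $y_0\in B_{2a}(x_0)$. The ingredients, all used with mass-only constants: a finite-speed support estimate $\mathrm{supp}\,\ud\rho(\cdot,t_0+t)\subset B_{a+C(\mu^{(m-1)\alpha/n}t^{\alpha/n}+t)}(x_0)$ with $\alpha=n/(n(m-1)+2)$, so that $s$ can be chosen with $\mathrm{supp}\,\ud\rho(\cdot,t_0+s)\subset B_{3a/2}(x_0)$; the Aronson--B\'enilan inequality $\Delta\ud u(\cdot,t_0+s)\ge-C/s$ for the pressure $\ud u$, which by the maximum principle on $B_{2a}(x_0)$ gives $\|\ud u(\cdot,t_0+s)\|_\infty\le Ca^2/s$; the trivial lower bound $\sup\ud\rho(\cdot,t_0+s)\gtrsim a^{k}$, hence $\ud u(y_0,t_0+s)\gtrsim a^{k(m-1)}$, coming from the mass in a ball of radius $\sim a$; and the pressure-gradient bound $|D\ud u(\cdot,t_0+s)|\lesssim(\|\ud u\|_\infty/s)^{1/2}\sim a/s$, which spreads that pointwise excess over a ball of radius $\sim a^{k(m-1)}/(a/s)\sim a$. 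Since the support estimate forces $s\lesssim a^{n/\alpha}\mu^{-(m-1)}\le a^{2-k(m-1)}$, and $2-k(m-1)>1$ when $m<2$ and $k$ is small, one gets $s\ll a$ and the foothold radius is comparable to $a$; it is exactly the inequality $(m-1)k<1$ that keeps this consistent.

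Finally I would run the comparison on $[t_0+s,t_0+a]$ against an explicit subsolution $\psi$ of~\eqref{equation2}: a Barenblatt profile for the pure porous-medium operator, damped by a constant factor $<1$, truncated strictly inside the region where $\partial_t$ of the Barenblatt is negative, and re-centered along the drift trajectory $\dot z=-D\Phi(z)$. Its mass parameter is taken $\sim a^{n+k}$ and its waiting time $\sim a^{2-k(m-1)}$, so that $\psi(\cdot,t_0+s)\le\ud\rho(\cdot,t_0+s)$ (both are $\sim a^{k}$ on a ball of radius $\sim a$, by the previous step). One checks, using the $C^2$-bound on $\Phi$ and $a_0$ small, that $\psi$ is a subsolution of~\eqref{equation2}; since $\ud\rho$ is a supersolution of~\eqref{equation2}, comparison yields $\ud\rho\ge\psi$ on that time interval. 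At $t=t_0+a$ the support radius of the Barenblatt part is $\sim a^{\alpha((m-1)(n+k)+1)/n}$, an exponent $<1$ precisely because $(m-1)k<1$, so it dominates $a$ for $a<a_0$ and $B_a(x_0)$ lies well inside it, where $\psi(\cdot,t_0+a)\ge c\,a^{\alpha(n+2k)/n}$. Hence $u(\cdot,t_0+a)\ge\ud u(\cdot,t_0+a)\ge c\,a^{(m-1)\alpha(n+2k)/n}\ge a^{k'}$ on $B_a(x_0)$ once $k$ is fixed small (say $(m-1)k<\tfrac12$), with $k':=2(m-1)\alpha(n+2k)/n$; this determines $k,k',a_0$ with the stated dependence on $m,n,\|\Phi\|_{C^2(B_1(x_0))}$. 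The main obstacle is the foothold step — turning the $L^1$-type hypothesis into a quantitative pointwise bound with no solution-dependent constants, precisely where the original proof erred — together with verifying that $\ud\rho$ really is a supersolution of~\eqref{equation2}, i.e. that the drift is dominated by the $\rho^{2-m}$ sink, which is again where $1<m<2$ enters; a lesser technical point is that the sub-barrier must be truncated, since the bare Barenblatt fails the inequality near its expanding edge.
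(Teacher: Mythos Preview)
Your structural insight is the paper's: pass to the density so the constant forcing becomes a sink $\sim\rho^{2-m}$ which, for $1<m<2$, is bounded on $\{0\le\rho\le1\}$; extract a pointwise foothold from the $L^1$ hypothesis; then propagate with a Barenblatt subsolution. But your foothold step carries a real gap. You invoke the Aronson--B\'enilan inequality and the gradient estimate $|Du|\lesssim(\|u\|_\infty/t)^{1/2}$ for $\ud\rho$, which solves the PME \emph{with drift}; neither estimate is standard in that setting. Your rescaling makes the drift an $O(a)$ perturbation, so perturbed versions are plausible, but you have not established them, and this is precisely the delicate point you flag as ``the main obstacle.'' The paper avoids it entirely. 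After rescaling $\tilde u(x,t)=u(a(x-x_0),a^2(t-t_0))$ and reducing to $\bar\rho_t\ge\Delta(\bar\rho^{\tilde m})-C_2a\chi_{\{\bar\rho>0\}}$, it compares from below with the weak solution $w$ of the signed PME with \emph{constant} sink \eqref{equation2} and initial data $\bar\rho(\cdot,0)\chi_{B_1(x_0)}$. A Barenblatt barrier from above confines $\{w>0\}\subset B_2(x_0)$ for $t\le1/2$; the sink drains at most $O(at)$ mass, so $\int w(\cdot,1/2)\ge a^k/4$; and the interior H\"older estimate of DiBenedetto--Gianazza--Vespri for \eqref{equation2} converts the resulting pointwise maximum $w(x^*,1/2)\gtrsim a^k$ into $w\gtrsim a^k$ on a ball $B_{a^{k/\gamma}}(x^*)$. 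No Aronson--B\'enilan, no gradient bound, and the constants depend only on $m,n$.

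The final propagation step also differs. The paper returns to the pressure variable and exploits the \emph{concavity} of the Barenblatt pressure $U$: the lowered profile $\tilde U=(U(x-x^*,t)-2C_1a\int_0^t c(s)\,ds)_+$ is then automatically a subsolution of the rescaled pressure inequality on $\overline{\{\tilde U>0\}}$, and a direct comparison up to rescaled time $a^{-1}$ (i.e.\ original time $t_0+a$) gives $\tilde u\ge a^{1-\lambda}$ in $B_1(x_0)\subset B_3(x^*)$. This is shorter than your density-form barrier translated along $\dot z=-D\Phi(z)$ and truncated inside the region where $\partial_t$ of the Barenblatt is negative; in particular, the paper's barrier needs no truncation near the free boundary and no moving center. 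Your route could likely be completed, but the paper's two choices---H\"older regularity of the auxiliary problem \eqref{equation2} for the foothold, and a concave pressure barrier for the propagation---are both cleaner and sidestep the drift altogether.
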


\begin{proof}
1. Let us define 
$$
 \tilde{u}(x,t):= u(a(x-x_0), a^2(t-t_0)).
$$
Since $u\leq 1$ in $\R^n\times[0,\infty)$,  $\tilde{u}$ satisfies, in the viscosity sense,
$$
\begin{array}{lll}
\tilde{u}_t &\geq& (m-1)\tilde{u}\Delta\tilde{u} +|D\tilde{u}|^2 -C_1a(|D\tilde{u}|+a\tilde{u})\\ \\
 & \geq & (m-1)\tilde{u}\Delta\tilde{u} + (1-C_1a)|D\tilde{u}|^2 -2C_1a,
\end{array}
$$
where the second inequality holds due to Cauchy-Schwarz inequality. Here the constant $C_1$ depends on the $C^2$-norm of $\Phi$ in $B_a(x_0)$.
Hence  $\bar{u}:=(1-C_1a)\tilde{u}$ satisfies
\begin{equation}\label{equation01}
\bar{u}_t \geq (\tilde{m}-1)\bar{u}\Delta\bar{u} + |D\bar{u}|^2 - 2C_1a,
\end{equation}
where $\tilde{m} = (1-C_1a)^{-1}(m-1)+1>m$. Choose $a_0$ small enough so that $\tilde{m}<2$. 

Therefore the corresponding density function, i.e. $\bar{\rho} = (\frac{\tilde{m}-1}{\tilde{m}}\bar{u})^{\frac{1}{\tilde{m}-1}}$ satisfies 
\begin{equation}\label{equation1}
\bar{\rho}_t \geq \Delta(\bar{\rho}^{\tilde{m}}) -\frac{2C_1}{\tilde{m}-1}a\bar{\rho}^{2-\tilde{m}} \geq \Delta (\bar{\rho}^{\tilde{m}}) -C_2a \chi_{\{\rho\geq 0\}}.
\end{equation}

2. Let $w(x,t)$ denote the weak solution of 
\begin{equation}\label{equation2}
w_t = \Delta (w|w|^{m-1} ) - C_2a\chi_{|x-x_0| \leq 2}
\end{equation}
with initial data 
$$
w(x,0)=\bar{\rho}(x,0)\chi_{|x-x_0|\leq 1}.
$$
 The weak solution $w(x,t)$ then exists  in $\R^n\times [0,\infty)$ by Theorem 5.7 of \cite{v}. 
Moreover due to \cite{dibgv}, $w$ is uniformly H\"{o}lder continuous in  $B_2(x_0)\times [1/4,1/2]$.   

Note that any nonnegative solution of the (PME), $\rho_t = \Delta(\rho^{\tilde{m}})$, is a supersolution of 
\eqref{equation2}. Therefore using an appropriate Barenblatt profile as a supersolution of \eqref{equation2} and using the fact that $\bar{\rho}(\cdot,0)\leq \chi_{|x-x_0|\leq 1}$, we have
\begin{equation}\label{domain}
\{x:w(x,t)>0\} \subset \{|x|\leq 2\}\hbox{ for } 0\leq t\leq 1/2.
\end{equation}
Therefore it follows that $w$ is a subsolution of \eqref{equation1}, and thus $w\leq \bar{\rho}$ for $t\in [0,1/2]$.

 Using \eqref{equation2} and the definition of weak solution (or formally integration by parts) yields that 
$$
\int w(x,t) dx = \int w(x,0) dx - c_nC_2at  \geq \frac{a^k}{2}-C_3at,
$$
where $c_n$ equals the volume of the $n$-dimensional ball with radius $2$. Since $k<1$, for small $a$ we have $\int w(x,1/2) dx \geq a^k/4$.  Let $x^*$ be the point where $w(\cdot,1/2)$ assumes its maximum, then from 
\eqref{domain} it follows that $|x-x^*| \leq 2$ and $w(x^*, 1/2) \geq C_4 a^k$ for some dimensional constant $C_4$. Due to the H\"{o}lder regularity of $w(\cdot,1/2)$, there exists $0<\gamma<1$ depending only on $m$ and $n$ such that 
\begin{equation}\label{mid-time}
\bar{\rho}(\cdot,1/2) \geq w(\cdot,1/2)\geq \frac{C_4}{2} a^k \hbox{ in } B_{a^{k_2}}(x^*), k_2 = \frac{k}{\gamma}.
\end{equation}

\vspace{10pt}

3. Let now $U(x,t) := B(x,t;1,C) = \frac{(C(t+1)^{2\lambda}-\frac{\lambda}{2}|x|^2)_+}{(t+1)}$ be the Barenblatt profile given in Lemma 2.18 of \cite{lk}, with $0<\lambda=((m-1)d+2)^{-1}<1/2$. Let us fix $C=a^{\lambda/2}$ such that
$$
C = a^{\lambda/2} \hbox{ (initial height)  and } \sqrt{\frac{2C}{\lambda}} = \sqrt{\frac{2}{\lambda}}a^{\lambda/4} \hbox{ (initial support size)}.
$$
If $k$ is sufficiently small, then $U(x-x^*,0) \leq \tilde{u}(\cdot,1/2)$ due to \eqref{mid-time}. Moreover, a straightforward computation yields that  $aU(\cdot,t),|DU|(\cdot,t)\leq c(t):= \sqrt{C}(t+1)^{\lambda-1}$ for $0\leq t\leq a^{-1}$.
Now let
$$
\tilde{U}(x,t): = (U(x-x^*,t)-2C_1a\int_{0}^t c(s)ds)_+
$$
Then, since $U(\cdot,t)$ is concave, we obtain
$$
\tilde{U}_t \leq (m-1)\tilde{U}\Delta\tilde{U} + |D\tilde{U}|^2 - C_1a(|D\tilde{U}|+a\tilde{U})\hbox { in } \overline{\{\tilde{U}>0\}}.
$$
Hence, by the comparison principle, $\tilde{u}(x,t+1/2)\geq \tilde{U}(x,t)$ in $ \R^n\times [0,\infty)$. In particular
$$
\tilde{u}(\cdot,a^{-1}) \geq \tilde{U} (\cdot,a^{-1}-1/2) \geq a^{1-\lambda} \hbox{ in } B_1(x_0)\subset B_3(x^*).
$$
We now conclude by scaling back to the original variable.
\end{proof}

\begin{lemma}[ Lemma 3.5 in \cite{lk}, corrected version]\label{lemma2}
Let $\mathcal{K}$ be a compact subset of $\R^n$ with $u=0$ outside of $\mathcal{K}$ for all time. Then there exists a constant $C>0$ depending on 
$m>1$, $\sup \rho$ and $\max_{x\in \mathcal{K}} \Delta\Phi(x)$ such that the following holds: Suppose
$$
\int_{B_C(0)} \rho(\cdot,t) dx \leq c_0 \hbox{ for } t_1 \leq t\leq t_2:= t_1+\log(1/c_0).
$$
Then $\rho(\cdot,t_2) \leq Cc_0^{k}$ in $B_1(0)$ with $k=\frac{2}{m(n+1)}$.

\end{lemma}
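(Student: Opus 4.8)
\noindent\emph{Proof proposal.} The plan is to turn the hypothesis, which bounds the $L^1$ mass of $\rho$ on the \emph{fixed} ball $B_C(0)$ over the long time interval $[t_1,t_2]$, into a pointwise bound on $B_1(0)$ at the final time, by combining a smoothing estimate for the porous medium flow with a finite speed of propagation argument that shields $B_1(0)$ from whatever (possibly order one) mass of $\rho$ lies outside $B_C(0)$. The drift will be handled only through its divergence structure, so that the sole trace of $\Phi$ in the estimates is $\Lambda:=\max_{x\in\mathcal{K}}\Delta\Phi(x)$: passing to the density form as in \eqref{equation1} and integrating by parts over $\R^n$ (legitimate since $\rho(\cdot,t)$ is supported in $\mathcal{K}$), the transport term $\nabla\cdot(\rho\nabla\Phi)$ contributes only $\int\rho^p\Delta\Phi\le\Lambda\int\rho^p$ to the evolution of $\int\rho^p$, so that $\rho$ is dominated by the weak solution $v$ of a porous medium equation with a bounded zeroth order source $v_t=\Delta(v^{m})+\Lambda v$. (Normalize $t_1=0$, write $\tau:=\log(1/c_0)$, and recall $0\le\rho\le M_0:=\sup\rho$; if $c_0$ lies above a threshold fixed by $m,M_0,\Lambda$ the conclusion is immediate from $\rho\le M_0$, so assume $c_0$ small.)

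The core is a localized ``mass to height'' estimate on a window of \emph{fixed} length: there are $T_0>0$ and a radius $C$, depending only on $m$, $M_0$, $\Lambda$ and the $C^2$-norm of $\Phi$ on $\mathcal{K}$, such that whenever $\int_{B_C(0)}\rho(\cdot,s)\,dx\le q$ for all $s\in[\sigma,\sigma+T_0]\subset[t_1,t_2]$, one has $\rho(\cdot,\sigma+T_0)\le Cq^{k}$ on $B_1(0)$. For this I would, first, build an insulating barrier: comparing $\rho$ from above, on $B_C(0)\setminus B_1(0)$, with a planar self-similar supersolution of height $M_0$ emanating from $\partial B_C(0)$ -- in density form, using the comparison principle exactly as in step~2 of the proof of Lemma~\ref{lemma1}, and the fact that the propagation speed of the density equation is bounded, the drift adding at most the finite speed controlled by $\Lambda$ and the $C^2$-norm of $\Phi$ -- the region influenced by $\partial B_C(0)$ advances inward by at most $c(M_0^{(m-1)/2}T_0^{1/2}+T_0)$, which is $<C-1$ once $C$ is taken large. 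Hence on $B_1(0)\times[\sigma,\sigma+T_0]$ the function $\rho$ is dominated by the weak solution $\bar v$ of $v_t=\Delta(v^m)+\Lambda v$ with datum $\rho(\cdot,\sigma)\chi_{B_C(0)}$, and since $T_0$ is fixed the factor $e^{\Lambda T_0}$ is harmless; the $L^1$--$L^\infty$ smoothing effect for the porous medium equation -- which I would derive either by comparison with the Barenblatt supersolutions of Lemma~2.18 of \cite{lk} in density form, or from the De Giorgi--Nash--Moser iteration driven by the energy inequality above together with the H\"older continuity of \cite{dibgv} to pass from a space-time average to a pointwise value -- then bounds $\|\bar v(\cdot,\sigma+T_0)\|_{L^\infty(B_1(0))}$ by $Cq^{k}$, the precise exponent $k=\frac{2}{m(n+1)}$ coming out of balancing the scaling exponents in this last step.

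Applying the localized estimate once on $[t_2-T_0,t_2]$ -- admissible because $\tau=\log(1/c_0)\ge T_0$ for $c_0$ small -- with $q=c_0$ then gives $\rho(\cdot,t_2)\le Cc_0^{k}$ on $B_1(0)$. I expect the main obstacle to be exactly the one this erratum is about: the whole comparison must be run in the density variable, since the pressure equation is ill-posed across sign changes, so every barrier -- the insulating planar wave, the Barenblatt profiles -- has to be a genuine weak supersolution of the density equation, and, when the estimate is organized through explicit barriers rather than the energy iteration, these pieces must be glued across the interface where the insulating wave meets the small-mass profile without manufacturing a spurious source term. A secondary technical point is to track how the shift of $m$ to $\tilde m$ needed to absorb the first order drift term (as in the passage from \eqref{equation01} to \eqref{equation1}) and the implicit exponent in the regularity estimate of \cite{dibgv} feed through the computation to produce precisely the stated $k$; here the case $m=2$ is genuinely easier, because then $\rho$ and the pressure $u$ are proportional and the argument of \cite{lk} could be run directly in the pressure variable -- this being the ``difference in the proof'' for general $m$ alluded to above.
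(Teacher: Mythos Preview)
Your route is genuinely different from the paper's, and it has a real gap at the insulating step. You assert that once the planar barrier shows the influence of $\partial B_C(0)$ does not reach $B_1(0)$ by time $T_0$, ``the function $\rho$ is dominated by the weak solution $\bar v$ of $v_t=\Delta(v^m)+\Lambda v$ with datum $\rho(\cdot,\sigma)\chi_{B_C(0)}$'' on $B_1(0)$. But comparison goes the other way: since $\rho(\cdot,\sigma)\ge\rho(\cdot,\sigma)\chi_{B_C(0)}$, one only gets $\rho\ge\bar v$. What you implicitly want is $\rho\le\bar v+B$ with $B$ the boundary barrier, and for the porous medium equation a sum of two solutions is neither a sub- nor a supersolution (one would need $\Delta(\bar v^m+B^m-(\bar v+B)^m)$ to have a sign, which it does not). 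Nor are the supports of $\bar v$ and $B$ disjoint, since $\rho(\cdot,\sigma)$ can be positive throughout $B_C(0)$. This is exactly the place where nonlinearity bites, and your sketch does not say how to get around it. A symptom that the computation has not been carried through: the $L^1\!\to\!L^\infty$ smoothing at fixed time $T_0$ yields the Barenblatt exponent $2/(n(m-1)+2)$, which is strictly larger than the stated $k=2/(m(n+1))$ for every $m>1$, so your argument, if it worked, would not give the exponent you claim.

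The paper avoids the superposition problem altogether. It first rescales $\tilde\rho(x,t):=a^{-1}\rho(a^{m/2}x,at)$ and then splits the \emph{initial data} additively, $\tilde\rho(\cdot,0)=\tilde\rho_1(\cdot,0)+\tilde\rho_2(\cdot,0)$ with $\tilde\rho_1(\cdot,0)=\rho_0/a$ carrying the small mass and $\tilde\rho_2(\cdot,0)\equiv 1/10$ absorbing the order-one background; the exponent $k=2/(m(n+1))$ then comes from the choice of $a$ in this rescaling, not from a smoothing estimate. This also corrects your reading of the $m\neq2$ modification: the Remark says the only change is the spatial scaling exponent (from $a$ to $a^{m/2}$), not a switch from pressure to density variables---for $m=2$ these scalings coincide, which is why the original proof happened to be written that way.
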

\begin{remark}
1. In step 2. of the original proof, where  we let $\tilde{\rho}=\tilde{\rho}_1+\tilde{\rho}_2$,  the initial data should be divided as follows: $\tilde{\rho}_1(\cdot,0)=\frac{\rho_0}{a}$ and $\tilde{\rho}_2(\cdot,0)=1/10$. The rest of the proof is the same.\\

2. The proof in \cite{lk} is written in the case $m=2$ without clarification: for $m\neq 2$ one has to replace the scaling for $\tilde{\rho}$ in the proof of step 2. by $\tilde{\rho}(x,t):= a^{-1}\rho(a^{m/2}x, at)$. Proceeding as before with this scaling yields the above statement. We note that in the original statement $k=1/(n+1)$.

\end{remark}

Using Lemmas~\ref{lemma1} -~\ref{lemma2}  and proceeding as in the proof of Theorem 3.6 in \cite{lk},  we obtain the following.
\begin{theorem}[Theorem 3.6 in \cite{lk}, corrected]\label{thm:correction}
Let $\Phi$ and $u_{\infty}$ be as in Theorem 3.2. Then there exists $K$ and $\alpha>0$ depending on  $m, \sup u_0, k_0, M_1$, $A:=min_{\Phi(x)>C_0} |D\Phi|$ and $n$ such that the following is true:
\begin{itemize}
\item[(a)] $\Gamma_t(u)=\partial\{u(\cdot,t)>0\}$ is in the $Ke^{-\alpha t}$-neighborhood of
the positive set $\{ u_{\infty}>0\}$.\\ \\
\item[(b)] If $1<m<2$, then $\Gamma_t(u)$ is in the $Ke^{-\alpha t} $-neighborhood of $\Gamma(u_{\infty})$. 
\end{itemize}
\end{theorem}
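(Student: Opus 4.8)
The plan is to derive Theorem~\ref{thm:correction} from Lemmas~\ref{lemma1} and~\ref{lemma2} by a now-standard iteration in dyadic time intervals, following the structure of the original proof of Theorem 3.6 in \cite{lk} but being careful about which direction of the free boundary inclusion each lemma controls. First I would set up the two complementary statements one wants to propagate in time: an \emph{inner} statement, that the support of $u(\cdot,t)$ eventually fills the positive set $\{u_\infty>0\}$ up to an exponentially small error, and an \emph{outer} statement, that the support of $u(\cdot,t)$ is contained in a shrinking neighborhood of $\{u_\infty>0\}$ (and, in the subquadratic case, of $\Gamma(u_\infty)$). The drift potential $\Phi$ supplies the mechanism: where $\Phi(x)>C_0$ the gradient $|D\Phi|$ is bounded below by $A$, so mass is transported toward the equilibrium set at a definite rate, and the combination of this transport with the comparison principle for the degenerate equation lets one upgrade a statement that holds at time $t$ into a sharper statement at time $t+\tau$ for a fixed time step $\tau$.

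The key steps, in order, would be: (i) Fix a small time step $\tau$ and show, using the drift and a Barenblatt-type supersolution as in step 3 of the proof of Lemma~\ref{lemma1}, that if at time $t$ the free boundary $\Gamma_t(u)$ lies within distance $\delta$ of $\{u_\infty>0\}$, then at time $t+\tau$ it lies within distance $(1-c)\delta$ for a fixed $c\in(0,1)$; iterating gives the $Ke^{-\alpha t}$ outer bound in (a). (ii) For the inner bound, observe that on any ball $B_a(x_0)$ with $x_0$ in the interior of $\{u_\infty>0\}$ the $L^1$ mass of $\rho(\cdot,t)$ cannot stay below the threshold $a^k$ of Lemma~\ref{lemma1} indefinitely — otherwise Lemma~\ref{lemma2} would force $\rho$ to decay like $c_0^k$ on $B_1$, contradicting conservation of total mass and the fact that $u_\infty$ is a nontrivial stationary state; hence Lemma~\ref{lemma1} applies and yields a quantitative lower bound $u(\cdot,t+a)\geq a^{k'}$, which propagates and shows the support reaches every such $x_0$ with an exponential rate. (iii) Combine (i) and (ii): the support is trapped between two exponentially converging neighborhoods of $\{u_\infty>0\}$, giving (a). (iv) For (b), when $1<m<2$ the equilibrium $u_\infty$ has the strong nondegeneracy / finite-speed structure that makes $\{u_\infty>0\}$ and a neighborhood of $\Gamma(u_\infty)$ essentially interchangeable near the boundary (this is exactly where the restriction $m<2$ enters, via the corrected Lemma~\ref{lemma1}); so the inner bound can be sharpened to say the solution is bounded below near $\partial\{u_\infty>0\}$, which upgrades the conclusion from a neighborhood of the positive set to a neighborhood of the free boundary itself.

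The main obstacle I expect is step (ii): turning the ``mass cannot stay small'' heuristic into a clean quantitative statement with explicit constants. One has to choose the radius $a$ and the time window $\log(1/c_0)$ in Lemma~\ref{lemma2} compatibly with the scale of the dyadic iteration, track how the lower bound $a^{k'}$ from Lemma~\ref{lemma1} degrades when converted back to a statement about $\Gamma_t$ at the next time step, and ensure the two exponents $k$, $k'$ produced by the two lemmas do not conflict. The second delicate point is making the drift-driven contraction in step (i) uniform down to the free boundary — near $\Gamma(u_\infty)$ the lower bound $A$ on $|D\Phi|$ is what saves the argument, but one must check the Barenblatt comparison function can be placed with the right height and support at each scale, which is precisely the computation carried out in step 3 of Lemma~\ref{lemma1} and which only closes for $\tilde m<2$; for $m\geq 2$ this is exactly why only part (a) survives and (b) does not.
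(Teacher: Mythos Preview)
Your overall architecture---an outer bound giving (a) and an additional inner bound giving (b)---matches the paper, which simply says to apply Lemmas~\ref{lemma1}--\ref{lemma2} and rerun the proof of Theorem~3.6 in \cite{lk}. But you have the two lemmas wired to the wrong halves of the argument, and this produces an internal inconsistency in your sketch.

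The clean division is: Lemma~\ref{lemma2} (valid for every $m>1$) is the engine behind the \emph{outer} bound and hence behind part~(a); Lemma~\ref{lemma1} (now valid only for $1<m<2$) is the engine behind the \emph{inner} bound and hence behind the upgrade from (a) to (b). In your step~(i) you instead invoke ``a Barenblatt-type supersolution as in step~3 of the proof of Lemma~\ref{lemma1}'' for the outer contraction, and in step~(iv) you then say this very comparison ``only closes for $\tilde m<2$.'' If that were so, part~(a) would fail for $m\ge 2$, contradicting the statement you are trying to prove. In fact step~3 of Lemma~\ref{lemma1} is a \emph{sub}solution argument (a lower barrier), and it works for any $m$; the restriction $1<m<2$ enters earlier, in steps~1--2, where one passes from pressure to density and needs $\bar\rho^{\,2-\tilde m}$ to be bounded (i.e.\ $2-\tilde m>0$) to obtain \eqref{equation1}. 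So your diagnosis of where the $m<2$ constraint bites is misplaced.

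A second soft spot is your step~(ii): you argue that if the mass in $B_a(x_0)$ stayed below $a^k$ then Lemma~\ref{lemma2} would make $\rho$ small on $B_1$, ``contradicting conservation of total mass.'' But Lemma~\ref{lemma2} only controls $\rho$ on a unit ball, not globally, so no contradiction with total mass follows. The way the original argument supplies mass inside $\{u_\infty>0\}$ is through the already-established convergence of $u$ to $u_\infty$ (Theorem~3.2 and the $L^1$/entropy estimates in \cite{lk}), not through Lemma~\ref{lemma2}. Once you have a quantitative lower bound on $\int_{B_a(x_0)}\rho$, Lemma~\ref{lemma1} converts it to $u\ge a^{k'}$ and the inner half of (b) follows; Lemma~\ref{lemma2} is used on the outside, to turn smallness of mass in the region $\{\Phi>C_0\}$ into smallness of $\rho$ there, which combined with the drift $|D\Phi|\ge A$ yields the exponential retreat of $\Gamma_t(u)$ toward $\{u_\infty>0\}$ claimed in (a). With the lemmas reassigned this way the iteration you describe goes through and the dependence on $m$ lines up with the statement.
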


\end{document}